\documentclass[letterpaper, 10 pt, conference]{ieeeconf}
\IEEEoverridecommandlockouts
\newcommand{\mat}[1]{\begin{matrix}#1\end{matrix}}

\usepackage{amsmath,amssymb}
\usepackage{cite}
\usepackage{amsmath,amssymb,amsfonts}
\usepackage{algorithmic}
\usepackage{graphicx}
\usepackage{textcomp}
\usepackage{xcolor}
\usepackage{comment}
\usepackage{subcaption}

\makeatletter
\def\@begintheorem#1#2{\trivlist
\item[\hskip\labelsep{\bfseries #1\ #2.}]\itshape}
\def\@opargbegintheorem#1#2#3{\trivlist
\item[\hskip\labelsep{\bfseries #1\ #2\ (#3).}]\itshape}
\makeatother
\newtheorem{theorem}{Theorem}
\newtheorem{proposition}[theorem]{Proposition}
\newtheorem{corollary}{Corollary}[theorem]
\newtheorem{lemma}[theorem]{Lemma}
\newtheorem{definition}{Definition}

\newtheorem{remark}{Remark}

\newcommand{\bmtx}{\begin{bmatrix}}
\newcommand{\emtx}{\end{bmatrix}}
\newcommand{\bsmtx}{\left[ \begin{smallmatrix}} 
\newcommand{\esmtx}{\end{smallmatrix} \right]}

\newcommand{\R}{\field{R}}

\renewcommand{\R}{\mathbb{R}}
\newcommand{\mcl}[1]{\mathcal{#1}}

\def\BibTeX{{\rm B\kern-.05em{\sc i\kern-.025em b}\kern-.08em
    T\kern-.1667em\lower.7ex\hbox{E}\kern-.125emX}}
\title{\LARGE \bf Parameterization of Seed Functions for Equivalent Representations of 
            Time-Varying Delay Systems}
\author{Sengiyumva Kisole$^{1}$, Jungbae Chun$^{2}$, Peter Seiler$^{2}$, and Matthew M.~Peet$^{1}$%
\thanks{This material is based upon work supported by the National Science Foundation under Grants NSF EPCN-2337751 and NSF EPCN-2337752.}
\thanks{$^{1}$School for the Engineering of Matter, Transport and Energy, Arizona State University, Tempe, AZ 85298, USA.
{\tt\small sengi.kisole@asu.edu, mpeet@asu.edu}}%
\thanks{$^{2}$Department of Electrical Engineering and Computer Science, University of Michigan, Ann Arbor, MI 48109, USA.
{\tt\small jungbaec@umich.edu, pseiler@umich.edu}}%
}
\begin{document}
\maketitle
\thispagestyle{empty}
\pagestyle{empty}

\begin{abstract}
Abel's classic transformation shows that any well-posed system with time-varying delay is equivalent to a parameter-varying system with fixed delay. The existence of such a parameter-varying constant delay representation then simplifies the problems of stability analysis and optimal control. Unfortunately, the method for construction of such transformations has been ad-hoc -- requiring an iterative time-stepping approach to constructing the transformation beginning with a seed function subject to boundary-value constraints. Moreover, a poor choice of seed function often results in a constant delay representati on with large time-variations in system parameters -- obviating the benefits of such a representation. In this paper, we show how the set of all feasible seed functions can be parameterized using a basis for $L_2$. This parameterization is then used to search for seed functions for which the corresponding time-transformation results in smaller parameter variation. 
The parameterization of admissible seed functions is illustrated with numerical examples that contrast how well-chosen and poorly chosen seed functions affect the boundedness of a time transformation.
\end{abstract}
\bstctlcite{IEEEexample:BSTcontrol}
\section{Introduction}
Time-varying delays appear in networked control, teleoperation, and biological regulation, where even small time-variations can alter stability and performance. However, analysis and control of systems with time-varying delay is complicated by the typically associated time-varying notion of state-space~\cite{michiels2011systems, verriest2012SS3, verriest2010well,verriest2011inconsistencies}.
To address this issue, a classical remedy, dating back to Abel-type transformations, shows that any well-posed system with a time-varying delay can be recast as a parameter-varying system with a fixed delay; with a change of time variables, one regains a time-invariant state space~\cite{heard1975change,vcermak1995continuous,neuman1990transformation,neuman1981transformations,neuman1982simultaneous,an2000note,brunner2009time1}. In the fixed-delay representation, standard tools— Linear Matrix Inequalities (LMIs), Integral Quadratic Constraints (IQCs), and  the Partial Integral Equations (PIEs) framework, can then be used for analysis and control design~\cite{pfifer2015overview, shivakumar2021pietools2, talitckii2023integral, megretski2002system}.


The conversion of a system with time varying delay to parameter-varying system with fixed delay is based on a given time-transformation $t \mapsto h(t)$ which must satisfy certain recursive constraints (Lem.~\ref{Cor:Equiv}). However, this time-transformation is not unique, but rather is parameterized by a choice of \textit{seed function}, $\phi$ which specifies the transformation on an initial time interval. This seed function must itself satisfy certain boundary constraints as determined by the time-varying delay. Then, for any valid seed function, the corresponding time-transformation may be constructed iteratively. 

Unfortunately, however, as is demonstrated in Section~\ref{sec6}, not all seed functions result in well-behaved time-transformations. Specifically, we show that many choices of seed function result in a time-transformation whose derivative, $\dot h$ grows over time. This growth in $\dot h$ translates directly into large parameter variations in the resulting fixed-delay representation -- an attempt to quantify this growth for systems with periodic time-varying delays via perturbative expansion can be found in~\cite{Chun2025Pertubative}. This parameter variation then prevents the use of techniques such as normalization~\cite{brunner2009time1,nah2020normalization} to the fixed-delay representation. 

The goal of this paper, then, is to propose a parameterization of seed functions which then allows one to find time-transformations with desirable properties such as periodicity and boundedness. Specifically, we provide an $L_2$ parameterization of the set of admissible seed functions ($S_\tau$), defining a affine operator $\mcl T:L_2 \rightarrow S$ such that for any $\nu \in L_2$, $\phi =\mcl T \nu \in S$. Furthermore, we show that for any $\phi \in S$, $\mcl T \phi'''=\phi$ -- establishing a one-to-one map from $L_2$ to $S$. By working directly with the seed parameters, $\nu$, rather than the seed itself, one can enforce constraints on the seed functions (and resulting time-transformations). For example, using a polynomial basis for $L_2$, one can use Sum of Squares (SoS)~\cite{parrilo2000structured} to enforce non-negativity of $\phi'$.

To establish the map from $L_2$ to seed functions, $\phi$, to time-transformation, $h$, to constant delay representation, we begin in Section~\ref{sec:equivalence} by defining the class of admissible time-transformations  and showing that for any such time-transformation, solutions of the resulting fixed-delay and time-varying delay systems are equivalent -- a slight extension of classical results in~\cite{heard1975change, vcermak1995continuous, neuman1990transformation}. We then define the set of admissible seed functions and show how any such seed function defines a resulting time-transformation, $h$. 

%
%

In Section~\ref{sec5}, we then establish the invertible mapping between seed parameters and seed functions -- also showing that the well-known quadratic seed function is a special case and that monotonicity of the seed function can be enforced using SoS constraints on the seed parameter.

Finally, in Section~\ref{sec6}, we motivate the proposed parameterization by demonstrating the impact of a seed function choice on the derivative of the time transformation, $\dot h$ --- comparing the effect of quadratic, exponential, and affine plus sinusoidal seed functions on $\dot h$ for a time-varying sinusoidal delay.

\section{Notation}
$\mathbb{N}^n$, $\R^n$, and $\R^n_+$ denote the space of $n-$dimensional vectors of  natural, real, and positive real numbers, respectively. 
$L_2([a,b],\R)$ is the space of square–integrable functions; $\mcl H^3([a,b],\R)$ is the Sobolev space of functions with third (weak) derivative in $L_2$. 
Composition is written $(g\circ f)(x)=g(f(x))$. Moreover, $f^{\circ k}$ denotes $k$-fold composition, and $(f^{-1})^{\circ k}$ the $k$-fold composition of the inverse (when it exists). For compact $\Omega\subset\R$, $\mcl C^k(\Omega)$ denotes the space of $k$-times continuously differentiable functions $f:\Omega\to\R$ with norm 
$\|f\|=\sup_{t\in\Omega}\|f(t)\|_2$.

\section{Equivalence Between Variable- and Fixed-Delay DDEs}\label{sec:equivalence}
Consider a delay differential equation (DDE) of the form:
\vspace{-2mm}
\begin{align}
\label{eqn:dde_var}
\dot{x}(t) &= A_0 x(t) + A_1 x\!\big(t-\tau(t)\big), \quad t\ge 0,\\
x(t) &= \zeta(t), \quad t\in[-\tau(0),\,0], \nonumber
\end{align}
where $A_0$, $A_1 \in \mathbb{R}^{n_x \times n_x}$. We assume the initial function \(\zeta \in \mathcal{C}([-\tau(0), 0], \mathbb{R}^{n_x})\) is continuous, and the time-varying delay \(\tau \in \mathcal{C}^1(\mathbb{R}, \mathbb{R}_+)\) is bounded with \(\dot{\tau}(t) < 1\) for all \(t \geq 0\).

\subsection{Time Transformations} \label{Equivalence}
A \emph{time transformation}, $h$, may be used to convert a DDE with time varying delay to a parameter-varying DDE with fixed time delay. This function is invertible and changes the time variable $t$ to a new time variable defined as $\lambda:=h^{-1}(t)$. 
The time transformation $h(\lambda)$, must be a strictly increasing function constructed to satisfy an Abel equation. This Abel equation ensures that the delay $ \tau(t) $ at time $ t = h(\lambda) $ aligns with a constant shift $ \tau^* $ in the new time $ \lambda $. This is formalized in the following lemma which is a slight modification of \cite[Thm.~1]{nah2020normalization}.

\begin{lemma}
\label{Cor:Equiv}
Suppose $\tau^*\in \mathbb{R}_+$ and $\tau \in \mathcal{C}^1(\mathbb{R}, \mathbb{R}_+)$  with $\dot{\tau}(t) < 1$. Let $h \in \mathcal{C}^1([-\tau^*,\infty), [-\tau(0), \infty))$ be a strictly increasing unbounded function,  with $h(0)=0$ and where 
\begin{equation}
\label{eqn:abel0}
 h(\lambda) - \tau(h(\lambda)) = h(\lambda - \tau^*), \quad \forall \lambda \geq 0.
\end{equation}
For any $\zeta \in \mathcal{C}[-\tau(0),0],$ if $x(t)$ satisfies
	\begin{equation}
		\label{eqn:01}
		\begin{aligned}
			\dot{x}(t) &= A_0 x(t) + A_1 x(t - \tau(t)), \quad \forall t \geq 0 \\
			x(t)&= \zeta (t), \quad \forall t \in [\,-\tau(0),\,0\,]
		\end{aligned}
	\end{equation}
then $\bar{x}(\lambda)= x(h(\lambda))$  and $\bar \zeta = \zeta \big(h(\lambda)\big)$ satisfy
	\begin{equation}
		\label{eqn:02}
		\begin{aligned}
			\dot{\bar{x}}(\lambda) &=  \dot{h}(\lambda) A_0 \bar{x}(\lambda) + \dot{h}(\lambda) A_1 \bar{x}(\lambda -\tau^*), \quad \forall \lambda \geq 0 \\
			\bar{x}(\lambda) &= \bar \zeta\!\big(\lambda\big), \quad \forall \lambda \in [-\tau^*,0].
		\end{aligned}
	\end{equation}
Conversely, if $\bar \zeta \in \mathcal{C}[-\tau^*,0],$ and $\bar x$ satisfies Eqn.~\eqref{eqn:02}, then $x(t) =\bar{x}(h^{-1}(t))$  and $\zeta \big(t\big)=\bar \zeta (h^{-1}(t)) $ satisfy Eqn.~\eqref{eqn:01}.
\end{lemma}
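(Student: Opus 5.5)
The plan is a direct chain-rule computation combined with the Abel equation \eqref{eqn:abel0}, carried out separately in each direction. First I fix the basic properties of $h$. Since $h$ is $\mathcal{C}^1$, strictly increasing and unbounded, and $h(0)=0$, it is a bijection from $[0,\infty)$ onto $[0,\infty)$. Evaluating \eqref{eqn:abel0} at $\lambda=0$ gives $h(-\tau^*)=-\tau(0)$. Hence $h$ also maps $[-\tau^*,0]$ bijectively onto $[-\tau(0),0]$, and $h^{-1}$ is continuous and strictly increasing. This is what makes the initial data correspond: $\bar\zeta=\zeta\circ h$ is continuous on $[-\tau^*,0]$ exactly when $\zeta=\bar\zeta\circ h^{-1}$ is continuous on $[-\tau(0),0]$. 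Likewise $\bar x(\lambda)=\zeta(h(\lambda))=\bar\zeta(\lambda)$ on $[-\tau^*,0]$, and conversely.

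\emph{Forward direction.} Let $x$ solve \eqref{eqn:01}, so $x$ is $\mathcal{C}^1$ on $[0,\infty)$. For $\lambda\ge 0$, $\bar x=x\circ h$ is differentiable with
\[
\dot{\bar x}(\lambda)=\dot h(\lambda)\,\dot x(h(\lambda))=\dot h(\lambda)\big(A_0x(h(\lambda))+A_1x\big(h(\lambda)-\tau(h(\lambda))\big)\big).
\]
By \eqref{eqn:abel0}, $h(\lambda)-\tau(h(\lambda))=h(\lambda-\tau^*)$, so the delayed term equals $x(h(\lambda-\tau^*))=\bar x(\lambda-\tau^*)$. Substituting gives \eqref{eqn:02}. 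At $\lambda=0$ the derivative is understood one-sided, consistent with \eqref{eqn:01}.

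\emph{Converse direction.} Let $\bar x$ solve \eqref{eqn:02} and set $x=\bar x\circ h^{-1}$. The step I expect to be the main obstacle is differentiating $h^{-1}$. The hypotheses give only $\dot h\ge 0$, not $\dot h>0$, so the inverse function theorem does not apply directly.

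To handle this, I would first show that $\dot h>0$ on $[0,\infty)$. Differentiating \eqref{eqn:abel0} gives
\[
\dot h(\lambda)\,\big(1-\dot\tau(h(\lambda))\big)=\dot h(\lambda-\tau^*).
\]
Since $1-\dot\tau>0$, $\dot h(\lambda)$ vanishes if and only if $\dot h(\lambda-\tau^*)$ does. Iterating this backward would place a zero of $\dot h$ in $[-\tau^*,0]$. I would then either rule this out by the strict monotonicity of $h$, or simply add $\dot h>0$ as a standing nondegeneracy assumption, as is implicit in ``admissible'' time transformations.

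Failing that, I would avoid differentiating $h^{-1}$ pointwise and work in integral form instead. Writing \eqref{eqn:02} as
\[
\bar x(\lambda)=\bar x(0)+\int_0^\lambda \dot h(s)\big(A_0\bar x(s)+A_1\bar x(s-\tau^*)\big)\,ds,
\]
the substitution $t=h(s)$ (valid for absolutely continuous monotone $h$) turns this into
\[
x(t)=x(0)+\int_0^t \big(A_0x(r)+A_1x(r-\tau(r))\big)\,dr.
\]
Here I use \eqref{eqn:abel0} in the form $h^{-1}(r)-\tau^*=h^{-1}(r-\tau(r))$, obtained by applying $h^{-1}$ to \eqref{eqn:abel0}. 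The integrand is continuous, so $x$ is $\mathcal{C}^1$ and satisfies \eqref{eqn:01}. Together with the initial-data correspondence from the first paragraph, this completes both directions.
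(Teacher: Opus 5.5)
Your proof is correct, and in the converse it takes a genuinely different and more careful route than the paper.

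\textbf{Forward direction.} This coincides with the paper's argument: the chain rule for $x\circ h$ together with \eqref{eqn:abel0}. You also make explicit that $h(-\tau^*)=-\tau(0)$ follows from \eqref{eqn:abel0} at $\lambda=0$. The paper uses this only tacitly, when it writes $[h(-\tau^*),h(0)]=[-\tau(0),0]$.

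\textbf{Converse direction.} The paper invokes the inverse function theorem to get $h^{-1}\in\mathcal{C}^1$ with $(h^{-1})'=1/(\dot h\circ h^{-1})$, and then differentiates $\bar x\circ h^{-1}$ directly. That is shorter and gives a pointwise formula for $(h^{-1})'$. However, it silently needs $\dot h>0$, which ``strictly increasing and $\mathcal{C}^1$'' does not supply.

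Your integral-form argument avoids this entirely. Take $F(s)=A_0\bar x(s)+A_1\bar x(s-\tau^*)$, which is continuous, and $G(t)=\int_0^t F(h^{-1}(r))\,dr$. Then $G$ is $\mathcal{C}^1$ because $h^{-1}$ is continuous. The identity $\frac{d}{ds}G(h(s))=\dot h(s)F(s)$ needs only $h\in\mathcal{C}^1$, so $x(t)=\bar x(0)+G(t)$. The relation $h^{-1}(r-\tau(r))=h^{-1}(r)-\tau^*$ then turns this into the integral form of \eqref{eqn:01}. So your version proves the lemma under exactly its stated hypotheses, whereas the paper's argument tacitly adds $\dot h>0$.

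\textbf{One correction to your hedging.} Your first suggested fix cannot work. Strict monotonicity does not rule out a zero of $\dot h$ in $[-\tau^*,0]$; think of $\lambda\mapsto\lambda^3$. The hypotheses genuinely allow such zeros: a strictly increasing seed with $\dot\phi(\lambda_0)=0$ at an interior point gives, by \eqref{eq:deriv-prop1}, $\dot h(\lambda_0+k\tau^*)=0$ for every $k$. Adding $\dot h>0$ as a standing assumption would weaken the lemma. The integral argument should therefore be the main line of your converse, not a fallback.
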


\begin{proof} 
By the inverse function theorem, $h^{-1} \in \mcl C^{1}$ and
\[
(h^{-1})'(t)=\frac{1}{\dot h\!\big(h^{-1}(t)\big)},\qquad t\in[-\tau(0),\infty).
\]
\emph{($\Rightarrow$)} Let $\zeta\in\mathcal{C}([-\tau(0),0])$ and suppose $x$ solves \eqref{eqn:01}. 
Define $\bar{x}(\lambda):=x\!\big(h(\lambda)\big)$. For $\lambda\ge 0$,
\begin{align*}
\dot{\bar{x}}(\lambda) &=\dot{x}\!\big(h(\lambda)\big)\dot h(\lambda) \\
&=\dot h(\lambda)\Big[A_0 x\!\big(h(\lambda)\big)+A_1 x\!\big(h(\lambda)-\tau(h(\lambda))\big)\Big].
\end{align*}
Using~\eqref{eqn:abel0}, we have:
\(
\dot{\bar{x}}(\lambda)=\dot h(\lambda)\Big[A_0 \bar{x}(\lambda)+A_1 \bar{x}(\lambda-\tau^*)\Big].
\) For the initial segment, if $\lambda\in[-\tau^*,0]$ then 
$h(\lambda)\in[h(-\tau^*),h(0)]=[-\tau(0),0]$, hence
\(
\bar{x}(\lambda)=x\!\big(h(\lambda)\big)=\zeta\!\big(h(\lambda)\big).
\)
Thus \eqref{eqn:02} holds.

\emph{($\Leftarrow$)} Let $\bar \zeta\in\mathcal{C}([-\tau^*,0])$ and suppose $\bar {x}$ satisfies \eqref{eqn:02}.
Define $x(t):=\bar {x}\!\big(h^{-1}(t)\big)$ for $t\ge 0$. Then
\begin{align*}
\dot{x}(t)&=\dot{\bar {x}}(h^{-1}(t))\,(h^{-1})'(t) 
=\dot{\bar{x}}(h^{-1}(t))\,\frac{1}{\dot h(h^{-1}(t))}\\
&=A_0 \bar{x}(h^{-1}(t))+A_1 \bar{x}(h^{-1}(t)-\tau^*)
\end{align*}
From~\eqref{eqn:abel0} , we obtain
\[
h^{-1}\!\big(t-\tau(t)\big)=h^{-1}(t)-\tau^*,
\]
so $\bar{x}(h^{-1}(t)-\tau^*)=\bar{x}\!\big(h^{-1}(t-\tau(t))\big)=x\!\big(t-\tau(t)\big)$. Hence
\(
\dot{x}(t)=A_0 x(t)+A_1 x\!\big(t-\tau(t)\big),
\)
which is the differential equation in \eqref{eqn:01}. For the initial segment, if $t\in[-\tau(0),0]$ then 
for $h^{-1}(t)\in[-\tau^*,0]$, and thus for  $\zeta(t)=\bar \zeta(h^{-1}(t))$,
\[
x(t)=\bar{x}\!\big(h^{-1}(t)\big)
=\bar \zeta\!\big(h^{-1}(t)\big)=:\zeta(t),
\]
so \eqref{eqn:01} holds.
\end{proof}

Lemma 1 establishes equivalence of solutions between a system with time-varying delay and a fixed-delay system with parameter-varying uncertainty. We note, however, that the choice of time-transformation, $h$, may not be uniquely defined. Nor have we suggested any approach to constructing such a transformation. In the following subsection, we show that such time-transformations may be parameterized through the choice of seed function, $\phi$.

\subsection{Recursive Construction of the Time Transformation}\label{sec3A}
Lemma~\ref{Cor:Equiv} defines a map from a set of admissible time-transformations to a set of equivalent fixed-delay representations of a system with time-varying delay. In this subsection, we show how such time-transformations may be parameterized by choice of admissible seed function, $\phi$.

The set of admissible seed functions is defined next based on ~\cite[Thm. 1]{nah2020normalization}, \cite[Sec. 2.3]{brunner2009time1}.
\begin{definition}
\label{def:seed}
We say that $\phi$ is a admissible seed function associated to $\tau^*, \tau(0) >0,  \dot \tau(0)< 1$ if $\phi$ is strictly increasing and $\phi \in S_{\dot \tau(0),\tau(0),\tau^*}$ where 
\begin{align}
\label{seed_set}
& S_{\dot \tau_0,\tau_0,\tau^*}:=\\
&\left\{\phi \in \mcl H^3[-\tau^*,0]\,:\, \mat{\phi(0)=0,\\
\phi(-\tau^*) =-\tau_0},\; \dot \phi(0)=\frac{\dot \phi(-\tau^*)}{\,1-\dot \tau_0} \right\} \notag
\end{align}


\end{definition}

For any given admissible seed function, we may construct an associated time transformation $h$ using the recursion defined in \cite{nah2020normalization, brunner2009time1}. Specifically, given $\tau(t)$ and  $\phi \in S_{\dot \tau(0),\tau(0),\tau^*}$, let $h \in \mcl{C}^1([-\tau^*, \infty), \R_+)$  be defined as:
\begin{equation}
\label{recur_con}
h(\lambda) :=
\begin{cases}
\phi(\lambda), & \lambda \in [-\tau^*, 0] \\
\theta^{-1}\big(h(\lambda - \tau^*)\big), & \lambda \geq 0
\end{cases}
\end{equation}
where $\theta(t):=t-\tau(t)$ is strictly increasing and unbounded and hence is invertible.
Equivalently, for $k \in \mathbb{N}$, 
\begin{equation}
\label{eqn:Recur}
h(\lambda) := (\theta^{-1})^{\circ k} \bigl( \phi(\lambda - k\tau^*) \bigr) \qquad \lambda \in [(k-1)\tau^*, k\tau^*].
\end{equation}
Moreover, for $k \in \mathbb{N}$ and $\lambda \in [(k-1)\tau^*, k\tau^*]$,
\begin{equation}
\label{eq:deriv-prop1}
\dot{h}(\lambda) 
= \frac{\dot{\phi}(\lambda - k\tau^*)}
     {\prod_{n=1}^{k} \dot{\theta}\bigl((\theta^{-1})^{\circ (k-n+1)}(\phi(\lambda - n\tau^*))\bigr)} .
\end{equation}

As shown in~\cite{nah2020normalization, brunner2009time1}, for any admissible seed function, this recursion results in a time-transformation, $h$ which satisfies the conditions of Lemma 1. However, the seed function for a given time-varying delay is not uniquely defined. It is restricted only by the values of $\tau(0)$ and $\dot \tau(0)$ (and the choice of $\tau^*$). Furthermore, we note that the corresponding fixed delay representation in Eqn.~\eqref{eqn:02} includes the multiplicative time-varying parameter, $\dot h(\lambda)$. From Eqn.~\eqref{eq:deriv-prop1} we see that $\dot h(\lambda)$  will depend on the choice of seed function. Thus, while all seed functions will result in equivalent system representations, some seed functions may result in fixed-delay representations with large or unbounded parameter variation. In the following section, we consider a parameterization of seed functions which will allow us to search for seed functions which result in time transformations with certain desirable properties.

\section{Parameterization of Seed Functions}\label{sec5}

In this section, we provide an invertible affine map ($\mcl T$) between $L_2$ and the set of seed functions $S_{\tau_0',\tau_0,\tau^*}$. The key observation here is that any seed function, $\phi$, is uniquely determined by its third derivative, $\phi'''$. Following the main result in Thm.~\ref{param_oper}, we provide SoS conditions for $\phi$ to be increasing and show that the previously used quadratic seed function is a special case of the proposed parameterization.

Define operator $\mcl T$ as follows. 
\vspace{-2mm}
\begin{multline}
(\mcl T\nu)(\lambda) = -\tau_0 + \tau_0 \beta(\lambda) \label{eqn:op} \\
+ \int_{-\tau^*}^{0} K(\lambda, \eta) \nu(\eta)\, d\eta + \int_{-\tau^*}^{\lambda} \frac{(\lambda - \eta)^2}{2} \nu(\eta)\, d\eta,
\end{multline}
where
\begin{equation}
\beta(\lambda) = \frac{2(1 - \tau_0')}{(2 - \tau_0')\tau^*}(\lambda + \tau^*) 
+ \frac{\tau_0'}{(2 - \tau_0')(\tau^*)^2}(\lambda + \tau^*)^2, \label{eqn:beta}
\end{equation}
and
\begin{equation}
\begin{aligned}
K(\lambda, \eta) = \frac{1 - \tau_0'}{2 - \tau_0'} \left( -\eta - \frac{\eta^2}{\tau^*} \right)(\lambda + \tau^*) \label{eqn:K} \\
\qquad - \frac{1}{2 - \tau_0'} \left( \frac{\tau_0'\eta^2}{2(\tau^*)^2} - \frac{1 - \tau_0'}{\tau^*}\eta \right)(\lambda + \tau^*)^2.
\end{aligned}
\end{equation}
The following theorem shows that $\mcl T$ defines an invertible map from $L_2$ to $S_{\tau_0',\tau_0,\tau^*}$. 
\begin{theorem}\label{param_oper}
Given $\tau^*$, $\tau_0 >0$ and $\tau_0' < 1$. Let $S_{\tau_0',\tau_0,\tau^*}$ be defined as in Eqn.~\eqref{seed_set} and $\mcl T$ be defined as in Eqn.~\eqref{eqn:op}. Then: 
\begin{enumerate}
    \item For any $\phi \in S_{\tau_0',\tau_0,\tau^*}$, $\mcl T\phi'''= \phi$.
    \item For any $\nu \in L_2[-\tau^*, 0]$, $\mcl T\nu \in S_{\tau_0',\tau_0,\tau^*}$.
    \item For any $\nu \in L_2[-\tau^*, 0]$, $(\mcl T\nu)'''=\nu$.
\end{enumerate}
\end{theorem}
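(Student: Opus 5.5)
The plan is to split $\mcl T\nu$ into a Volterra part and a part that is polynomial in $\lambda$. The Volterra part $V\nu(\lambda):=\int_{-\tau^*}^{\lambda}\frac{(\lambda-\eta)^2}{2}\nu(\eta)\,d\eta$ carries all of the "third-derivative" information. Everything else, $-\tau_0+\tau_0\beta(\lambda)+\int_{-\tau^*}^0 K(\lambda,\eta)\nu(\eta)\,d\eta$, is a polynomial of degree at most two in $(\lambda+\tau^*)$, because $\beta$ and $K(\cdot,\eta)$ are such polynomials. Its coefficients depend affinely on $\nu$ and are finite by Cauchy--Schwarz, since $K$ is bounded on $[-\tau^*,0]^2$. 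I would prove item 3 first, then item 2, and derive item 1 last from items 2 and 3 together with a uniqueness argument.

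\textbf{Item 3.} The polynomial part has vanishing third derivative. For $\nu\in L_2\subset L_1[-\tau^*,0]$, I would differentiate $V\nu$ under the integral (Leibniz rule; the boundary terms vanish because the kernel and its first derivative vanish at $\eta=\lambda$). This gives $(V\nu)'(\lambda)=\int_{-\tau^*}^{\lambda}(\lambda-\eta)\nu(\eta)\,d\eta$ and $(V\nu)''(\lambda)=\int_{-\tau^*}^{\lambda}\nu(\eta)\,d\eta$. The last expression is absolutely continuous with a.e.\ derivative $\nu$, so $(V\nu)'''=\nu$ weakly. This also shows $\mcl T\nu\in\mcl H^3[-\tau^*,0]$.

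\textbf{Item 2.} It remains to check the three constraints defining $S_{\tau_0',\tau_0,\tau^*}$.
\begin{itemize}
\item At $\lambda=-\tau^*$: $\beta$ vanishes, $K(-\tau^*,\eta)$ vanishes (every term carries a factor $(\lambda+\tau^*)$), and the Volterra integral is over an empty interval. Hence $(\mcl T\nu)(-\tau^*)=-\tau_0$.
\item At $\lambda=0$: a one-line computation gives $\beta(0)=\frac{2(1-\tau_0')+\tau_0'}{2-\tau_0'}=1$. Expanding $K(0,\eta)$ with $\lambda+\tau^*=\tau^*$, the terms linear in $\eta$ cancel and the quadratic terms combine to $-\frac{(1-\tau_0'/2)}{2-\tau_0'}\eta^2=-\eta^2/2$. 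This exactly cancels the Volterra kernel $\eta^2/2$ at $\lambda=0$, so $(\mcl T\nu)(0)=0$.
\item Derivative condition: compute $(\mcl T\nu)'(-\tau^*)=\tau_0\beta'(-\tau^*)+\int\partial_\lambda K(-\tau^*,\eta)\nu\,d\eta$ (the Volterra derivative vanishes there) and $(\mcl T\nu)'(0)=\tau_0\beta'(0)+\int\big[\partial_\lambda K(0,\eta)-\eta\big]\nu\,d\eta$. Then verify that $(1-\tau_0')(\mcl T\nu)'(0)=(\mcl T\nu)'(-\tau^*)$ holds separately for the $\tau_0$ terms and, pointwise in $\eta$, for the kernels multiplying $\nu$.
\end{itemize}
I expect the derivative condition to be the main obstacle. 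It is purely algebraic but is where the specific coefficients in $\beta$ and $K$ are actually forced, so I would organize it by collecting the coefficients of $\eta$ and $\eta^2$ separately.

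\textbf{Item 1.} Let $\phi\in S_{\tau_0',\tau_0,\tau^*}$ and set $q:=\phi-\mcl T\phi'''$. Since $\phi'''\in L_2$, items 2 and 3 give $\mcl T\phi'''\in S_{\tau_0',\tau_0,\tau^*}$ and $q'''=0$, so $q$ is a polynomial of degree at most two. Both $\phi$ and $\mcl T\phi'''$ lie in the affine set $S_{\tau_0',\tau_0,\tau^*}$, so $q$ satisfies the homogeneous constraints $q(0)=q(-\tau^*)=0$ and $(1-\tau_0')q'(0)=q'(-\tau^*)$. The first two force $q(\lambda)=c\lambda(\lambda+\tau^*)$. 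Then $q'(0)=c\tau^*$ and $q'(-\tau^*)=-c\tau^*$, so the third constraint reads $c\tau^*(2-\tau_0')=0$. Since $\tau_0'<1$ and $\tau^*>0$, this gives $c=0$, hence $q=0$ and $\mcl T\phi'''=\phi$.
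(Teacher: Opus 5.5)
Your proposal is correct. Items 2 and 3 match the paper's argument: you differentiate the Volterra term three times, and you evaluate $\beta$, $K$ and $\partial_\lambda K$ at $\lambda=-\tau^*$ and $\lambda=0$. The coefficient check you deferred does close, since $(1-\tau_0')\beta'(0)=\beta'(-\tau^*)$ and $(1-\tau_0')\bigl(\partial_\lambda K(0,\eta)-\eta\bigr)=\partial_\lambda K(-\tau^*,\eta)=\frac{1-\tau_0'}{2-\tau_0'}\bigl(-\eta-\eta^2/\tau^*\bigr)$. Your multiplicative form of this check is also slightly cleaner than the paper's ratio $(\mcl T\nu)'(-\tau^*)/(\mcl T\nu)'(0)$, which tacitly assumes $(\mcl T\nu)'(0)\neq 0$.

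The real difference is item 1. The paper proves it first, independently of items 2 and 3. It expands $\phi$ by Taylor's formula with integral remainder about $-\tau^*$, and uses the three constraints of $S_{\tau_0',\tau_0,\tau^*}$ to solve for $\phi'(-\tau^*)$ and $\phi''(-\tau^*)$ in terms of $\int\eta\,\phi'''$ and $\int\eta^2\phi'''$. Back-substitution then produces $\beta$ and $K$, so they are actually derived. You instead prove item 1 last, as a uniqueness statement: $q=\phi-\mcl T\phi'''$ is a quadratic satisfying the homogeneous constraints, so $q=c\lambda(\lambda+\tau^*)$ with $c\tau^*(2-\tau_0')=0$.

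What your route buys:
\begin{itemize}
\item It is shorter and reuses the item-2 algebra rather than redoing it in another form.
\item It isolates exactly what injectivity needs, namely $\tau^*>0$ and $\tau_0'\neq 2$.
\item It makes plain that $\mcl T$ and $\phi\mapsto\phi'''$ are mutually inverse bijections between $L_2[-\tau^*,0]$ and $S_{\tau_0',\tau_0,\tau^*}$.
\end{itemize}
The paper's route is constructive: it explains where the formula for $\mcl T$ comes from instead of verifying a formula given in advance. Its proof of item 1 also does not depend on the item-2 computations being correct.
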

\begin{proof}
For 1), since $\phi \in S_{\tau_0',\tau_0,\tau^*} \subset \mcl H^3 [-\tau^*,0] \subset \mcl C^2[-\tau^*,0]$, we apply the Taylor formula with integral reminder~\cite{kountourogiannis2003derivation}: for any $f \in \mcl H^{k+1}([a,b])$ and  for any $x \in [a,b]$,
\begin{multline} 
\label{Eqn12}
f(x) = f(a) + f'(a)(x - a) + \frac{f''(a)}{2!}(x - a)^2 + \cdots \\
+ \frac{f^{(k)}(a)}{k!}(x - a)^k + \int_a^x f^{(k+1)}(\xi)\,\frac{(x - \xi)^k}{k!}\,d\xi.
\end{multline} 
Applying~\eqref{Eqn12} with  $f=\phi$, $a=-\tau^*$ and $k=2$ yeilds
\begin{equation}
\label{eq:taylor_phi_prime}
\phi'(0) = \phi'(-\tau^*) + \tau^* \phi''(-\tau^*) - \int_{-\tau^*}^{0} \eta \phi'''(\eta)\, d\eta,
\end{equation}
\begin{equation}
\label{eq:taylor_phi_zero}
\begin{aligned}
\phi(0) = \phi(-\tau^*) + \tau^* \phi'(-\tau^*) + \frac{1}{2}(\tau^*)^2 \phi''(-\tau^*) \\
+ \int_{-\tau^*}^{0} \frac{\eta^2}{2} \phi'''(\eta)\, d\eta, \quad \text{and}
\end{aligned}
\end{equation}
\begin{equation}
\label{eq:taylor_phi_lambda}
\begin{aligned}
\phi(\lambda) = \phi(-\tau^*) + (\lambda + \tau^*)\phi'(-\tau^*) + \frac{1}{2}(\lambda + \tau^*)^2 \phi''(-\tau^*)  \\
 + \int_{-\tau^*}^{\lambda} \frac{(\lambda - \eta)^2}{2} \phi'''(\eta)\, d\eta. 
 \end{aligned}
\end{equation}

$\phi \in S_{\tau_0',\tau_0,\tau^*}$ implies $\phi'(0) = \frac{\phi'(-\tau^*)}{(1-\dot{\tau}_0)}$, which combined with~\eqref{eq:taylor_phi_prime} yields
\begin{equation}
\label{eq:phi_dd_from_prime}
\phi''(-\tau^*) = \frac{1}{\tau^*} \left[ \frac{\tau_0'}{1 - \tau_0'} \phi'(-\tau^*)  + \int_{-\tau^*}^{0} \eta \phi'''(\eta)\, d\eta \right].
\end{equation}
Likewise combining $\phi(0) = 0$ and $\phi(-\tau^*) = -\tau_0$ with \eqref{eq:taylor_phi_zero} imply
\begin{equation}
\label{eq:phi_dd_from_zero}
\phi''(-\tau^*) = \frac{2}{(\tau^*)^2} \left[ \tau_0 - \tau^* \phi'(-\tau^*) 
 - \int_{-\tau^*}^{0} \frac{\eta^2}{2} \phi'''(\eta)\, d\eta \right].
\end{equation}
Substituting \eqref{eq:phi_dd_from_zero} into \eqref{eq:taylor_phi_prime} and rearranging yields
\begin{equation}
\label{eq:phi_p_at_minus}
\begin{aligned}
\phi'(-\tau^*) = \frac{1 - \tau_0'}{2 - \tau_0'} \left[ \frac{2\tau_0}{\tau^*} - \frac{1}{\tau^*} \int_{-\tau^*}^{0} \eta^2 \phi'''(\eta)\, d\eta \right. \\
\left. - \int_{-\tau^*}^{0} \eta \phi'''(\eta)\, d\eta \right].
\end{aligned}
\end{equation}
Substituting \eqref{eq:phi_p_at_minus} into \eqref{eq:phi_dd_from_prime}, we get:
\begin{multline}
\label{eq:phi_dd_final}
\phi''(-\tau^*) = \frac{1}{\tau^*} \left[ \frac{\tau_0'}{2 - \tau_0'} \left( \frac{2\tau_0}{\tau^*} - \frac{1}{\tau^*} \int_{-\tau^*}^{0} \eta^2 \phi'''(\eta)\, d\eta \right. \right. \\
\left. \left. - \int_{-\tau^*}^{0} \eta \phi'''(\eta)\, d\eta \right) + \int_{-\tau^*}^{0} \eta \phi'''(\eta)\, d\eta \right] \\
= \frac{2 \tau_0'}{2 - \tau_0'} \frac{\tau_0}{(\tau^*)^2} - \frac{\tau_0'}{2 - \tau_0'} \frac{1}{(\tau^*)^2} \int_{-\tau^*}^{0} \eta^2 \phi'''(\eta)\, d\eta \\
\qquad + \frac{2(1 - \tau_0')}{2 - \tau_0'} \frac{1}{\tau^*} \int_{-\tau^*}^{0} \eta \phi'''(\eta)\, d\eta.
\end{multline}
Now, substituting \eqref{eq:phi_p_at_minus} and \eqref{eq:phi_dd_final} into \eqref{eq:taylor_phi_lambda} and using the boundary condition $\phi(-\tau^*) =-\tau_0$, we obtain:
\begin{align*}
&\phi(\lambda) = -\tau_0 + (\lambda + \tau^*) \frac{1 - \tau_0'}{2 - \tau_0'} \left[ \frac{2\tau_0}{\tau^*} \right. \\
&\qquad  \qquad\qquad\left. - \frac{1}{\tau^*} \int_{-\tau^*}^{0} \eta^2 \phi'''(\eta)\, d\eta - \int_{-\tau^*}^{0} \eta \phi'''(\eta)\, d\eta \right] \\
&\qquad \quad + \frac{(\lambda + \tau^*)^2}{2} \left[ \frac{2 \tau_0'}{2 - \tau_0'} \frac{\tau_0}{(\tau^*)^2} \right. \\
&\qquad\qquad\qquad \left. - \frac{\tau_0'}{2 - \tau_0'} \frac{1}{(\tau^*)^2} \int_{-\tau^*}^{0} \eta^2 \phi'''(\eta)\, d\eta \right. \\
&\qquad \qquad\qquad\qquad \left. + \frac{2(1 - \tau_0')}{2 - \tau_0'} \frac{1}{\tau^*} \int_{-\tau^*}^{0} \eta \phi'''(\eta)\, d\eta \right] \\
&\qquad \quad + \int_{-\tau^*}^{\lambda} \frac{(\lambda - \eta)^2}{2} \phi'''(\eta)\, d\eta\\
&\qquad= -\tau_0 + \tau_0 \beta(\lambda) \\
&\qquad \quad  + \int_{-\tau^*}^{0} K(\lambda, \eta) \phi'''(\eta)\, d\eta + \int_{-\tau^*}^{\lambda} \frac{(\lambda - \eta)^2}{2} \phi'''(\eta)\, d\eta \\
&=(\mathcal{T}\phi''')(\lambda), \;\; \text{for all} \;\; \lambda\in[-\tau^*,0].
\end{align*}

Next, we establish 3) by showing that for any $\nu \in L_2[-\tau^*, 0]$, if $\phi = \mcl T\nu$, then $\phi''' = \nu$.
Recall from~\eqref{eqn:op} that
\vspace{-3mm}
\begin{multline*}
(\mcl T\nu)(\lambda) = -\tau_0 + \tau_0 \beta(\lambda) \\
+ \int_{-\tau^*}^{0} K(\lambda, \eta) \nu(\eta)\, d\eta + \int_{-\tau^*}^{\lambda} \frac{(\lambda - \eta)^2}{2} \nu(\eta)\, d\eta.
\end{multline*}
From the definitions of $\beta$ in~\eqref{eqn:beta} and $K$~in \eqref{eqn:K} , we observe that $\beta(\lambda)$ and $K(\lambda,\eta)$ are at most quadratic in $\lambda$. Hence,  $\beta'''(\lambda) = 0$ and $\partial_\lambda^3 K(\lambda,\eta) = 0$. Next, we note that
\begin{align*}
\frac{d}{d\lambda} \int_{-\tau^*}^\lambda \frac{(\lambda - \eta)^2}{2} \nu(\eta)\, d\eta &= \int_{-\tau^*}^\lambda (\lambda - \eta) \nu(\eta)\, d\eta, \\
\Rightarrow \quad \frac{d^2}{d\lambda^2} \int_{-\tau^*}^\lambda \frac{(\lambda - \eta)^2}{2} \nu(\eta)\, d\eta &= \int_{-\tau^*}^\lambda \nu(\eta)\, d\eta, \\
\Rightarrow \quad \frac{d^3}{d\lambda^3} \int_{-\tau^*}^\lambda \frac{(\lambda - \eta)^2}{2} \nu(\eta)\, d\eta &= \nu(\lambda).
\end{align*}
Thus, we conclude that $(\mcl T\nu)''' = \nu$.

For 2), suppose $\nu \in L_2[-\tau^*, 0]$. As per Eqn.~\eqref{seed_set}, we need to show that $\mcl T\phi \in H^3$, $(\mcl T\phi)(0)=0$, $(\mcl T\phi)(-\tau^*)=-\tau_0$, and $(\mcl T\phi)'(0)(1-\dot \tau)=(\mcl T\phi)'(-\tau^*)$. First, 3) implies $\phi = \mcl T \nu \in H^3$.  
Next, since $\beta(-\tau^*)=0$ and $K(-\tau^*,\eta)=0$, we have $(\mcl T\nu)(-\tau^*)=-\tau_0$. Third, since $\beta(0)=1$ and $K(0,\eta)=-\frac{\eta^2}{2}$, we have
\[
(\mcl T\nu)(0)=-\tau_0+\tau_0 + \int_{-\tau^*}^{0} \hspace{-.95mm} \frac{-\eta^2}{2} \nu(\eta)\, d \eta \\
+\int_{-\tau^*}^{0} \hspace{-.95mm} \frac{\eta^2}{2}\nu(\eta)\,d\eta = 0.
\]

For the final condition, differentiating~\eqref{eqn:op}, we obtain:
\begin{multline}
\label{deriv_op}
(\mcl T\nu)'(\lambda) = \tau_0 \beta'(\lambda) + \int_{-\tau^*}^{0} \partial_{\lambda} K(\lambda, \eta) \nu(\eta)\, d\eta \\
+ \int_{-\tau^*}^{\lambda} (\lambda-\eta) \nu(\eta)\, d \eta, \quad \forall \lambda \in [-\tau^*,0],
\end{multline}
\[
\text{where} \quad \beta'(\lambda)
= \frac{2\bigl(1-\tau_0'\bigr)}{\bigl(2-\tau_0'\bigr)\,\tau^*}
+ \frac{2\,\tau_0'}{\bigl(2-\tau_0'\bigr)\,(\tau^*)^{2}}\,(\lambda+\tau^*)
\]
\vspace{-2mm}
\begin{multline*}
\text{and} \;\; \partial_{\lambda} K(\lambda,\eta)
=
\frac{1-\tau_0'}{2-\tau_0'}
\left(-\eta-\frac{\eta^{2}}{\tau^*}\right)
\\
-
\frac{2}{2-\tau_0'}
\left(
\frac{\tau_0'\eta^{2}}{2(\tau^*)^{2}}
-
\frac{1-\tau_0'}{\tau^*}\eta
\right)
(\lambda+\tau^*).
\end{multline*}
We also note that
\[
\beta'(-\tau^*)=\frac{2(1-\tau_0')}{(2-\tau_0')\tau^*}
\]
and 
\[
\partial_\lambda K(-\tau^*,\eta)
	=
	\frac{1-\tau_0'}{2-\tau_0'}\Bigl(-\eta-\frac{\eta^2}{\tau^*}\Bigr)
\]
which implies
\begin{align}
&(\mcl T\nu)'(-\tau^*) = \tau_0 \beta'(-\tau^*) + \int_{-\tau^*}^{0} \partial_{\lambda} K(-\tau^*, \eta) \nu(\eta)\, d\eta \notag \\
&= \frac{1 - \tau_0'}{2 - \tau_0'} \left[ \frac{2\tau_0}{\tau^*} - \frac{1}{\tau^*} \int_{-\tau^*}^{0} \eta^2 \nu(\eta)\, d\eta  - \int_{-\tau^*}^{0} \eta \nu(\eta)\, d\eta \right].\label{eqn:Tv_at_tau_star2}
\end{align}
Similarly, since 
\[
\beta'(0)=\frac{2}{(2-\tau_0')\tau^*}
\]
and
\[
\partial_\lambda K(0,\eta)
=
\frac{1}{2-\tau_0'}\Bigl((1-\tau_0')\eta-\frac{\eta^2}{\tau^*}\Bigr),
\]
we have
\begin{align}
&(\mcl T\nu)'(0) = \tau_0 \beta'(0) + \int_{-\tau^*}^{0} \hspace{-1.2mm} \hspace{-1.2mm}\partial_{\lambda} K(0, \eta) \nu(\eta)\, d\eta - \int_{-\tau^*}^{0} \hspace{-1.2mm}\hspace{-1.2mm} \eta \nu(\eta)\, d \eta \notag \\
&= \frac{1}{2-\tau_0'} \left[ \frac{2\tau_0}{\tau^*}
- \frac{1}{\tau^*}\int_{-\tau^*}^{0}\eta^{2}\nu(\eta)\,d\eta - \int_{-\tau^*}^{0}\eta\nu(\eta)\,d\eta \right].
\label{eqn:Tv_at_tau_zero2}
\end{align}
Combining Eqn.~\eqref{eqn:Tv_at_tau_star2} and~\eqref{eqn:Tv_at_tau_zero2}, we obtain:
\[
\frac{(\mcl T\nu)'(-\tau^*)}{(\mcl T\nu)'(0)}
= 1-\tau_0'.
\]
We conclude that $\mcl T\nu \in S_{\tau_0',\tau_0,\tau^*}.$

\end{proof}

Having defined a parameterization of seed functions $\mcl T \nu$ for $\nu \in L_2$, we now observe that the simplest such seed function (for $\nu=0$) is precisely the quadratic seed function derived in~\cite[Thm. 1]{nah2020normalization}.
\begin{remark}
If $\nu(\eta) = 0$ for all $\eta \in [-\tau^*, 0]$, then the integral terms in $\mcl T \nu$ (See Eqn.~\eqref{eqn:op}) vanish, and 
for $\lambda \in [-\tau^*,0]$,
\vspace{-2mm}
\begin{equation}
\label{trivial}
\phi(\lambda) = -\tau(0) + \tau(0) \beta(\lambda),
\end{equation}
where  $\beta(\lambda)$ in~\eqref{eqn:beta} is a quadratic polynomial in $\lambda$.
This  quadratic seed function coincides with the choice  used in ~\cite[Thm. 1]{nah2020normalization}.
\end{remark}

Although Thm.~\ref{param_oper} ensures that for any $\nu \in L_2$, $\phi = \mcl T\nu \in S_{\tau_0',\tau_0,\tau^*}$, admissibility of a seed function further requires $\phi'$ to be non-negative. Such a condition can be imposed upon the seed parameter, $\nu$ as follows.
\begin{corollary}\label{cor:SOS}
Given $\tau_0', \tau_0,\tau^*$, let  $\mcl T$  be as in Eqn.~\eqref{eqn:op}. For any $\nu\in L_2[-\tau^*,0]$, $(\mcl T\nu)'(\lambda)>0$ for all $\lambda \in [-\tau^*,0]$ if and only if 
\vspace{-4mm}
\begin{multline*}
\label{pos_cond}
\tau_0 \beta'(\lambda) + \int_{-\tau^*}^{0} \partial_{\lambda} K(\lambda, \eta) \nu(\eta)\, d\eta  \\
+ \int_{-\tau^*}^{\lambda} (\lambda-\eta) \nu(\eta)\, d \eta >0,
\end{multline*}
for all $\lambda \in [-\tau^*,0]$.
\end{corollary}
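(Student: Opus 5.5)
The corollary follows directly from the closed-form expression for $(\mcl T\nu)'$ obtained in the proof of Thm.~\ref{param_oper}, Eqn.~\eqref{deriv_op}. The plan is to show that this formula holds pointwise for every $\lambda\in[-\tau^*,0]$ and every $\nu\in L_2[-\tau^*,0]$. The stated inequality is then literally the condition $(\mcl T\nu)'(\lambda)>0$, which gives both directions of the equivalence at once.

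\textbf{Step 1: term-by-term differentiation.} Starting from the definition \eqref{eqn:op}, I would differentiate each term separately.
\begin{itemize}
\item $\beta$ is a polynomial, so its derivative is the explicit $\beta'$ already computed.
\item For $\int_{-\tau^*}^0 K(\lambda,\eta)\nu(\eta)\,d\eta$, note that $K$ is a polynomial in $(\lambda,\eta)$, so $\partial_\lambda K$ is bounded on the compact square. Since $\nu\in L_2\subset L_1$ on a bounded interval, differentiation under the integral sign is justified by dominated convergence, giving $\int_{-\tau^*}^0 \partial_\lambda K(\lambda,\eta)\nu(\eta)\,d\eta$.
\item For the Volterra term $\int_{-\tau^*}^{\lambda}\frac{(\lambda-\eta)^2}{2}\nu(\eta)\,d\eta$, apply the Leibniz rule for absolutely continuous integrands. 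The boundary contribution $\frac{(\lambda-\lambda)^2}{2}\nu(\lambda)$ vanishes, and what remains is $\int_{-\tau^*}^{\lambda}(\lambda-\eta)\nu(\eta)\,d\eta$.
\end{itemize}

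\textbf{Step 2: regularity.} By part 3) of Thm.~\ref{param_oper}, $\mcl T\nu\in\mcl H^3[-\tau^*,0]\subset\mcl C^2$. Hence $(\mcl T\nu)'$ is a genuine continuous function and not merely an $L_2$ class, so the identity holds for every $\lambda$ and not just almost everywhere. This is the only point requiring care, since the corollary quantifies over all $\lambda$.

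\textbf{Step 3: conclusion.} Substituting the Step 1 formula into the condition $(\mcl T\nu)'(\lambda)>0$ for all $\lambda\in[-\tau^*,0]$ yields exactly the displayed inequality, and conversely. As a remark, if $\nu$ is chosen polynomial, the left-hand side is a polynomial in $\lambda$. Positivity on the interval can then be certified by a Positivstellensatz/SoS representation, e.g.\ $p(\lambda)-\epsilon=s_0(\lambda)+s_1(\lambda)\,(-\lambda)(\lambda+\tau^*)$ with $s_0,s_1$ SoS. This is the intended use of the corollary, but it is not needed for the equivalence itself.

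There is no real obstacle here. The only nontrivial justification is the differentiation under the integral in Step 1 together with the pointwise validity in Step 2, and both follow from boundedness of the polynomial kernels and the embedding $\mcl H^3\subset\mcl C^2$.
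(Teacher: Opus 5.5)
Your proposal is correct and matches the paper's proof, which simply observes that the displayed inequality is the derivative formula \eqref{deriv_op} for $(\mcl T\nu)'$ substituted into the condition $(\mcl T\nu)'(\lambda)>0$. Your additional justification (differentiating under the integral sign, handling the Volterra term, and using $\mcl H^3\subset\mcl C^2$ so the identity holds for every $\lambda$ rather than almost everywhere) supplies rigor the paper leaves implicit, but the argument is the same.
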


\begin{proof}
Follows from the definition of $\mcl T$ in Eqn.~\eqref{deriv_op}. 
\end{proof}
If the seed parameters are polynomial, then SoS optimization can be used to enforce the non-negativity condition in Cor.~\ref{cor:SOS}. Note that since the non-negativity condition here is univariate, the corresponding univariate  SoS non-negativity test is both \emph{necessary} and \emph{sufficient}. 
\section{Numerical Comparison of Seed Functions}\label{sec6}
\begin{figure}[t]
\vspace{2mm}
    \centering
    \begin{subfigure}[t]{0.49\linewidth}
        \centering
        \includegraphics[width=\linewidth]{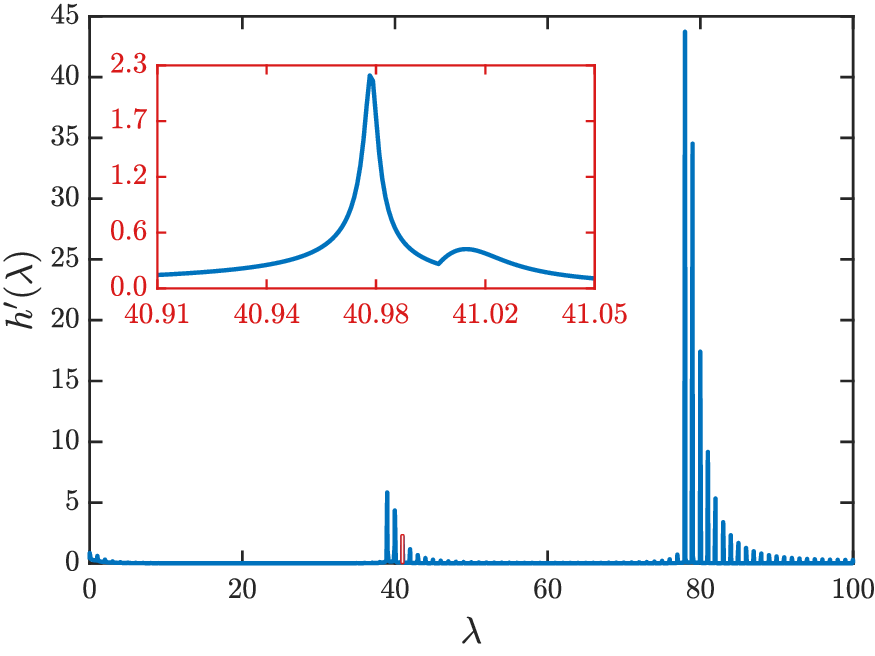}
        \subcaption{}
        \label{fig:two-up:a}
    \end{subfigure}\hfill
    \begin{subfigure}[t]{0.49\linewidth}
        \centering
        \includegraphics[width=\linewidth]{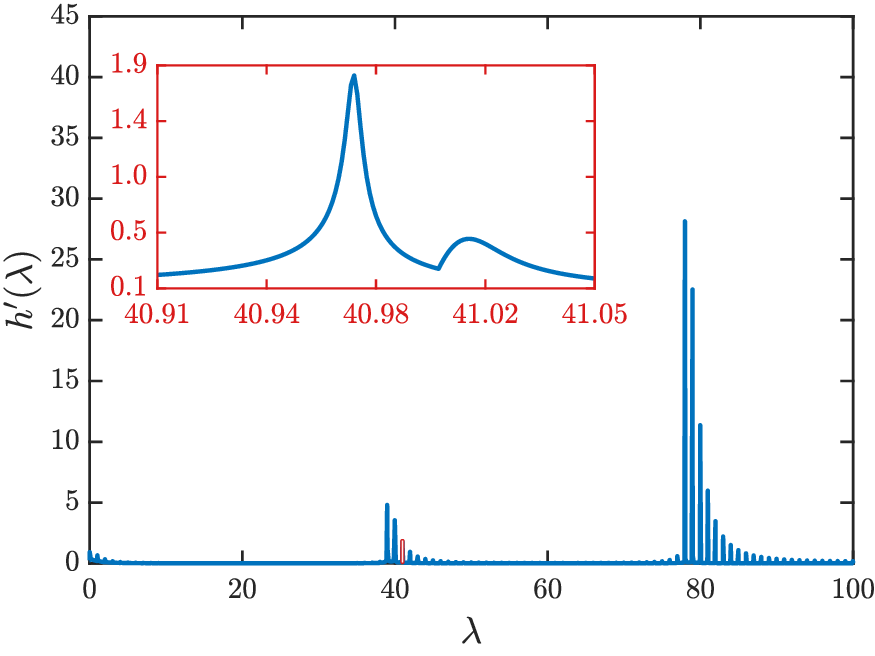}
        \subcaption{}
        \label{fig:two-up:b}
    \end{subfigure}\hfill
    \begin{subfigure}[t]{0.49\linewidth}
        \centering
        \includegraphics[width=\linewidth]{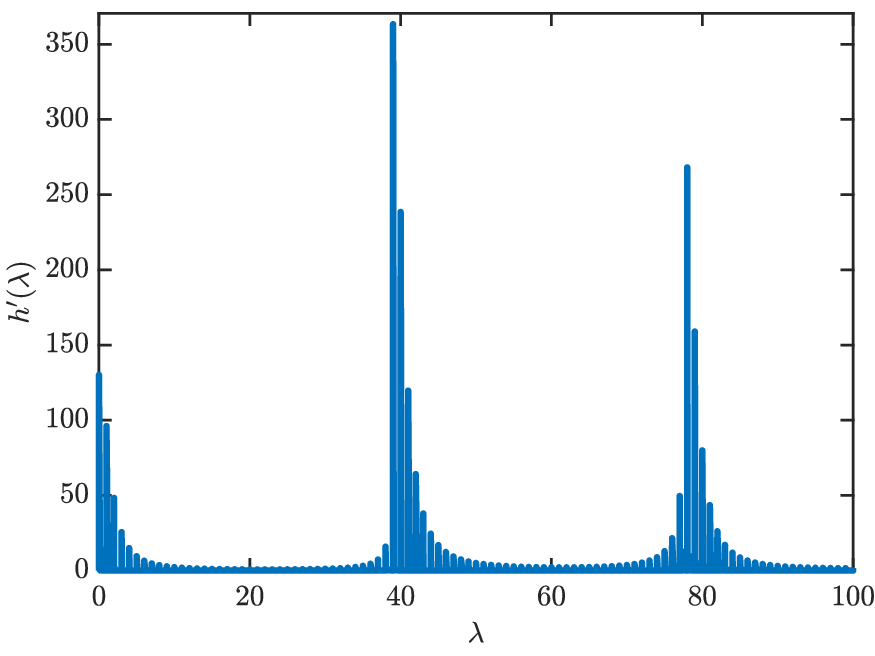}
        \subcaption{}
        \label{fig:two-up:c}
    \end{subfigure}
     \caption{The derivative $h'(\lambda)$ of the time transformation for (a) quadratic seed, (b) affine plus sinusoidal seed, and (c) exponential seed, with different maximum values of $h'$ for horizon of 100-- illustrating the impact of seed function choice on stability margins for a delay $\tau(t) = (\frac{1}{2\pi} - 0.001) \sin(2\pi t) + (\frac{1}{2\pi} + 0.001)$.}
  \label{fig:two-up}
\end{figure}

\begin{figure}
\vspace{2mm}
  \centering
    \begin{subfigure}[t]{0.49\linewidth}
        \centering
        \includegraphics[width=\linewidth]{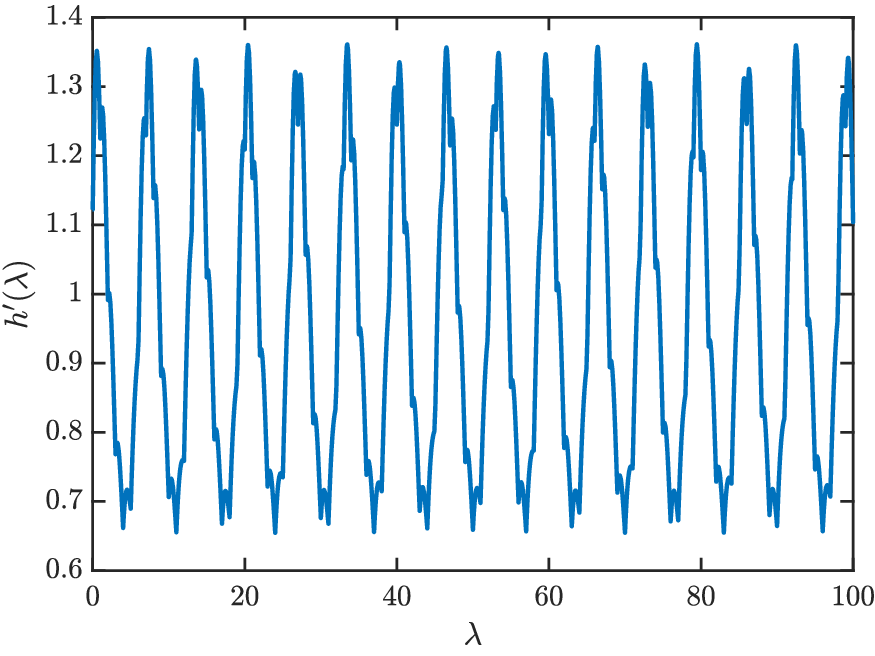}
        \subcaption{}
        \label{fig:two-up:d}
    \end{subfigure}\hfill
    \begin{subfigure}[t]{0.49\linewidth}
        \centering
        \includegraphics[width=\linewidth]{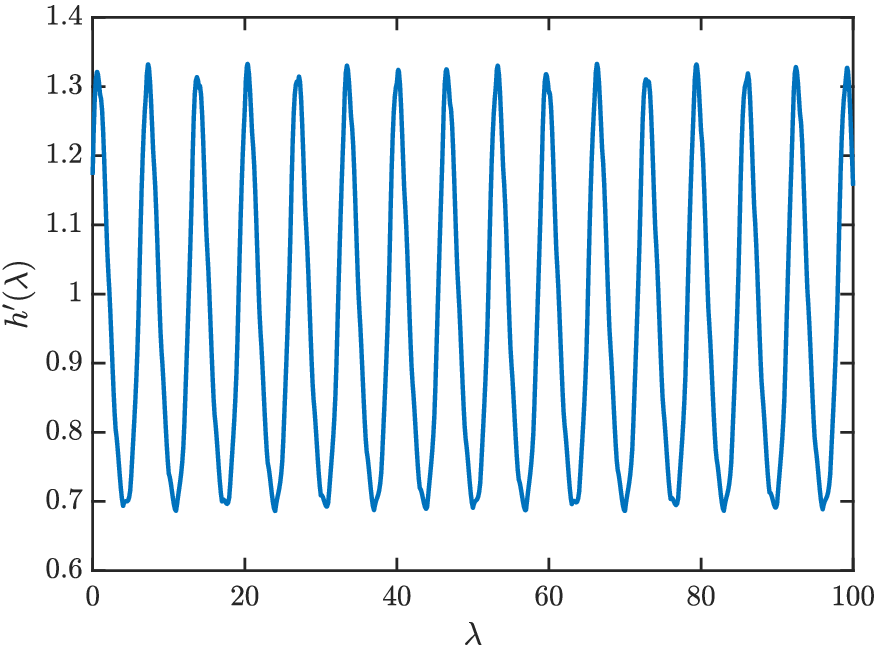}
        \subcaption{}
        \label{fig:two-up:e}
    \end{subfigure}\hfill
    \begin{subfigure}[t]{0.49\linewidth}
        \centering
        \includegraphics[width=\linewidth]{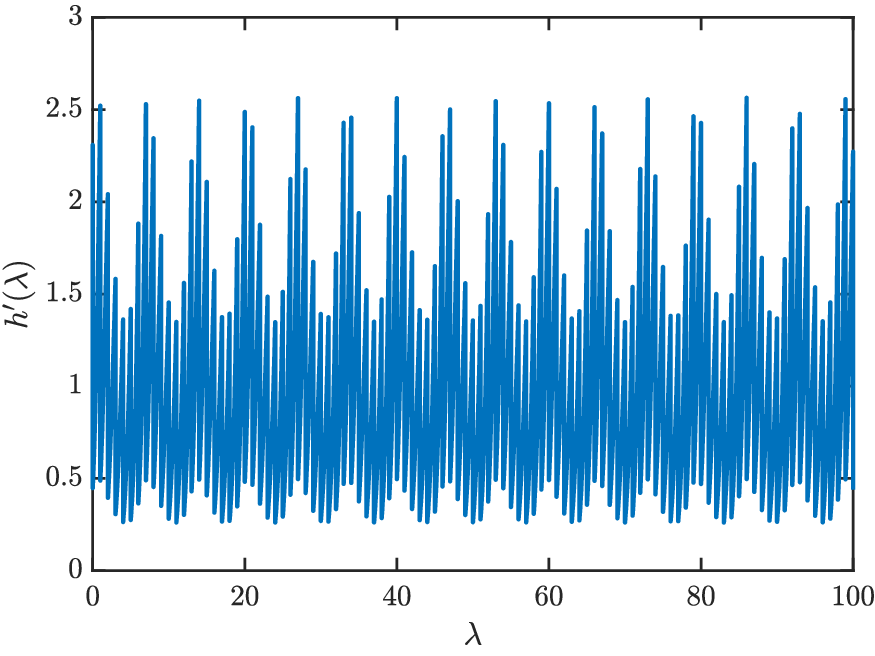}
        \subcaption{}
        \label{fig:two-up:f}
    \end{subfigure}
    \caption{The derivative $h'(\lambda)$ of the time transformation for (a) quadratic seed, (b) affine plus sinusoidal seed, and (c) exponential seed, with different maximum values of $h'$ for horizon of 100 for the delay $\tau(t)=1+0.3sin(t)$.}
    \label{fig:example2}
\end{figure}

For a given time-varying delay $\tau(t)$, the choice of seed function  significantly influences the time transformation $h$ and its derivative $h'$, potentially complicating analysis of the fixed-delay, parameter-varying representation. In this section, we compare the effect of three choices of seed parameter on the derivative of the time-transformation, $h'$.

Specifically, we consider two examples of periodic time-varying delay, and for each example, examine three candidate seed parameters. For each resulting seed function, we construct $h'(\lambda)$ over an extended interval. 

Specifically, recall that for $\theta(t)=t-\tau(t)$,
\vspace{-2mm}
\begin{equation*}
h(\lambda) := (\theta^{-1})^{\circ k} \bigl( \phi(\lambda - k\tau^*) \bigr) \qquad \lambda \in [(k-1)\tau^*, k\tau^*].
\end{equation*}
Because there is no analytic expression for $\theta^{-1}$, the $h(\lambda)$ and $\dot h(\lambda)$ are computed pointwise in $\lambda$. That is, for every given $ \lambda \in [(k-1)\tau^*, k\tau^*]$ with $k\in \mathbb{N}$, we initialize 
 $h_0:=\phi(\lambda-k\tau^*)$ and for $j=0, \cdots, k$,  compute $h_{j+1}:=(\theta^{-1})(h_{j})$ where $\theta^{-1}$ is evaluated numerically using Newton iteration -- yielding $h_j=(\theta^{-1})^{\circ j} \bigl( \phi(\lambda - k\tau^*) \bigr)$ and $h_k=h(\lambda)$. The $h_j$ are then used to compute $h'(\lambda)$ as
\[
h'(\lambda) = \frac{\phi'(\lambda-k\tau^*)}{\prod^k_{j=1} \theta'(h_j)}
\]
For all examples, a time horizon of $\lambda \in [0,100]$ is used.

\paragraph{Example 1}
First consider time-varying delay $\tau(t) = \gamma_0 \sin(2\pi t) + \gamma_1$, where $\gamma_0 = \frac{1}{2\pi} - 0.001$ and $\gamma_1 = \frac{1}{2\pi} + 0.001$. Here $\tau(0)=\gamma_1$, $ \tau'(0)=2\pi \gamma_0$ and we choose $\tau^*=1$, corresponding to the period of delay.

We now select three seed parameters $\nu\in L_2$: 
\vspace{-2mm}
\begin{align*}
\nu_1(\lambda)&=0,\qquad \nu_2(\lambda) = \frac{8 \gamma_1 }{1 - e^{-2}} e^{2 \lambda}\\
\nu_3(\lambda) &= -\Lambda_1 \left(\frac{\pi}{2 }\right)^3 \sin\left(\pi (\lambda + 1)\right)
\end{align*}
where
\vspace{-2mm}
\[
\Lambda_1 = \frac{4\pi \gamma_0 \gamma_1}{\pi-2\pi^2\gamma_0+2\gamma_1} .
\]
The corresponding quadratic, exponential and affine plus sinusoidal seed functions are then 
\begin{equation*}
\label{quadratic_seed}
(\mcl T\nu_1)(\lambda) = -\tau(0) + \tau(0) \beta(\lambda), \;\;
(\mcl T\nu_2)(\lambda) = \frac{\tau(0) (e^{2 \lambda} - 1)}{1 - e^{-2}}
\end{equation*}
and
\begin{equation*}
\label{affine_sinusoidal}
(\mcl T\nu)(\lambda) =  \Lambda_2 - \tau(0) + \Lambda_2 \lambda + \Lambda_1 \left(1 - \cos\left(\frac{\pi}{2} (\lambda + 1) \right)\right),
\end{equation*}
where
\[
 \Lambda_2 = \frac{0.002 \pi^2 \gamma_1}{0.002 \pi^2+ 2\gamma_1}
\]
 and where $\beta(\lambda)$ is defined in Eqn.~\eqref{eqn:beta} for $\tau_0=\gamma_1$ and $ \tau_0'=2 \pi \gamma_0$. It is easily verified that the seeds $ \mcl T(\nu_i) \in S_{\tau_0',\tau_0,\tau^*}$ are monotonic.

\paragraph{Example 2} In the second example, we use a time-varying delay $\tau(t)=1+0.3\sin(t)$. Here $\tau(0)=1$, $\tau'(0)=0.3$ and we choose $\tau^*=1$, which is a fraction $\frac{1}{2\pi}$ the period of delay. In this case, we again use slightly modifed version of $\nu_1,\nu_2$ and $\nu_3$ from the first example:
\vspace{-3mm}
\begin{align*}
\nu_1(\lambda)&=0,\qquad \nu_2(\lambda) = \frac{8}{1 - e^{-2}} e^{2 \lambda}\\
\nu_3(\lambda) &= -\Lambda_1 \left(\frac{\pi}{2 }\right)^3 \sin\left(\pi (\lambda + 1)\right)
\end{align*}
where
\vspace{-2mm}
\[
\Lambda_1 = \frac{2.6}{0.7 \pi + 2}, \quad \Lambda_2 = \frac{0.6}{0.7\pi +2}.
\]
\paragraph{Analysis} The derivative of the time-transformation, $ h'(\lambda)$ corresponding to the time-delays and seed parameters from Example 1 can be found in Figure~\ref{fig:two-up}. For Example 2, $ h'(\lambda)$ is shown in Figure~\ref{fig:example2}. Among the tested seed functions, the affine-plus-sinusoidal parameter produces the smallest bounds on $h'$, followed by the quadratic function, while the exponential function results in the largest bounds. The suitability of the affine-plus-sinusoidal may be due to structural similarity between seed function and time-varying delay, suggesting the possibility of seed functions whose corresponding time-transformation admit global bounds on the derivative. These results also highlight the importance of careful selection of seed function to minimize parameter variation, thereby enhancing stability analysis in small-gain or IQC frameworks.

\section{Conclusion}
Systems with time-varying delay can be equivalently represented by systems with fixed delay and multiplicative parameter variation. However, this representation is not unique, being parameterized by a time-transformation which is, in turn, defined by a seed function. The choice of seed function has a significant effect on the parameter variation of the resulting constant-delay representation. In this paper, we show that the set of admissible seed functions can be parameterized by the set of seed parameters consisting of the space of $L_2$ functions. We then examine several choices of seed parameter and show how this choice impacts the parameter variation of the resulting fixed-delay representation. This parameterization may allow for direct optimization of seed functions to minimize parameter variation for use in small gain or IQC type stability conditions.
\bibliographystyle{IEEEtran}
\bibliography{References}

\end{document}